\documentclass[12pt]{amsart}
\usepackage{amssymb,mathtools,amsthm}

\newtheorem{thm}{Theorem}[section]
\newtheorem{prop}[thm]{Proposition}
\newtheorem{cor}[thm]{Corollary}
\theoremstyle{definition}
\newtheorem{defn}[thm]{Definition}
\newtheorem{rem}[thm]{Remark}

\newcommand{\bk}{{\boldsymbol{k}}}
\newcommand{\bn}{{\boldsymbol{n}}}
\newcommand{\bv}{{\boldsymbol{v}}}
\newcommand{\bx}{{\boldsymbol{x}}}
\newcommand{\by}{{\boldsymbol{y}}}
\newcommand{\bz}{{\boldsymbol{z}}}
\newcommand{\bN}{{\boldsymbol{N}}}

\DeclareMathOperator{\dep}{dep}

\title[Monotonicity of Coefficients of Interpolated MZVs]
{Monotonicity of Coefficients of Interpolated Multiple Zeta Values}
\author{Shuji Yamamoto}
\address{Interdisciplinary Faculty of Science and Engineering, Shimane University, 
1060 Nishi-Kawatsu, Matsue, 690-8504, Japan. }
\email{yamashu@riko.shimane-u.ac.jp}

\subjclass[2020]{11M32}
\keywords{Multiple zeta values, Interpolated multiple zeta values}
\thanks{This research was supported by JSPS KAKENHI JP21K03185.}

\begin{document}

\begin{abstract}
Let $\zeta^t(\bk)=\sum_{d=1}^{r}S_d(\bk)t^{r-d}$ be 
the interpolated multiple zeta value of 
an admissible index $\bk$ of depth $r$. 
We show the monotonicity property of coefficients $S_1(\bk)\ge\cdots\ge S_r(\bk)>0$. 
In the proof, we represent the coefficients as integrals over a common measure space 
and compares the integrands pointwise. 
As consequences, we determine the sign and vanishing of $\zeta^{-1}(\bk)$ 
and locate the zeros of $\zeta^t(\bk)$ in the closed unit disk.
\end{abstract}

\maketitle

\section{Introduction}
For an admissible index $\bk=(k_1,\ldots,k_r)$, 
the interpolated multiple zeta value is defined by 
\begin{align*}
\zeta^t(\bk)
&\coloneqq \sum_{\bk'\preceq\bk}t^{r-\dep(\bk')}\zeta(\bk')
=\sum_{0<n_1\le\cdots\le n_r}
\frac{t^{\#\{i\mid n_i=n_{i+1}\}}}{n_1^{k_1}\dots n_r^{k_r}},
\end{align*}
where $\bk'\preceq\bk$ means that an index $\bk'$ is obtained 
by changing some commas in $\bk=(k_1,\ldots,k_r)$ to plus symbols. 
For example, 
\[\zeta^t(k_1,k_2,k_3)=\zeta(k_1,k_2,k_3)+t\zeta(k_1+k_2,k_3)
+t\zeta(k_1,k_2+k_3)+t^2\zeta(k_1+k_2+k_3). \]
In fact, the variables $k_1,\ldots,k_r$ are not necessarily integers; 
in what follows, we fix an $r$-tuple $\bk=(k_1,\ldots,k_r)$ of \emph{real numbers} 
with $k_1,\ldots,k_{r-1}\ge 1$, $k_r\ge 2$. 

To state our main theorem, we write $S_d(\bk)$ 
for the coefficient of $t^{r-d}$ in the polynomial $\zeta^t(\bk)$. 
Explicitly, we have 
\[S_d(\bk)=\sum_{\substack{\bk'\preceq\bk\\ \dep(\bk')=d}}
\zeta(\bk'). \]

\begin{thm}\label{thm:main}
For each integer $d$ with $1\le d<r$, we have  
\begin{equation}\label{eq:main}
S_d(\bk)\ge S_{d+1}(\bk). 
\end{equation}
Equality holds if and only if $\bk=(\{1\}^{r-1},2)$. 
\end{thm}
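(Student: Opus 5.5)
We use the Gamma-integral representation $n^{-k}=\Gamma(k)^{-1}\int_0^\infty x^{k-1}e^{-nx}\,dx$, valid for real $k>0$. This puts all the $S_d(\bk)$ on one measure space $(0,\infty)^r$ with measure $d\mu=\prod_i x_i^{k_i-1}dx_i/\Gamma(k_i)$. In the sum over $0<n_1\le\cdots\le n_r$ we write $n_i=m_1+\cdots+m_i$ with $m_1\ge1$ and $m_j\ge0$ for $j\ge2$. Then $\sum_i n_ix_i=\sum_j m_jy_j$ with $y_j=x_j+\cdots+x_r$. The condition $n_{j-1}=n_j$ is exactly $m_j=0$. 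Summing the remaining $m_j\ge1$ geometrically with $f(y)=e^{-y}/(1-e^{-y})=1/(e^y-1)$ gives
\[
S_d(\bk)=\int f(y_1)\,e_{d-1}\bigl(f(y_2),\ldots,f(y_r)\bigr)\,d\mu ,
\]
where $e_j$ denotes the elementary symmetric polynomial. Finiteness is guaranteed by $k_r\ge2$.

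Comparing integrands naively fails, since $e_{d-1}(a)\ge e_d(a)$ is false when some $a_j=f(y_j)>1$, i.e.\ when $y_j$ is small. So the first real step is to rewrite $S_{d+1}$ over the same space with a better integrand. Each term of $e_d$ that contains a factor $f(y_j)$ with $y_j$ small should be transported, either by integration by parts or by a change of variables in $x_{j-1},x_j$, onto a configuration comparable to a term of $e_{d-1}$. The tool is $f'(y)=-f(y)-f(y)^2$, i.e.\ $f^2=-f'-f$. This identity lets a product with an extra factor be traded for a derivative, which can then be moved onto the weight $x^{k-1}$. The hypothesis $k_i\ge1$ makes the boundary terms vanish or have a sign, and $k_r\ge2$ absorbs the final factor. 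Concretely, I would first treat $r=2$, where the claim is $\zeta(k_1+k_2)\ge\zeta(k_1,k_2)$. I would prove it by writing $\zeta(k_1,k_2)=\int f(y_1)f(y_2)\,d\mu$ and $\zeta(k_1+k_2)=\int f(y_1)\,d\mu$, substituting $(x_1,x_2)\mapsto(x_1,y_1)$, and integrating in $x_1$ to compare a Beta-type kernel with $f(y_2)$. This should produce an integrand difference which is a nonnegative function times $(k_1-1)$ or $(k_2-2)$. That explains the equality case $(1,2)$.

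For general $r$ and $d$, I would apply the same two-variable transformation to each adjacent pair $(j-1,j)$ inside $e_d$, after grouping the terms of $e_d$ according to their largest (or smallest) index $j$ carrying an $f(y_j)$. A term of $e_d$ then becomes a term of $e_{d-1}$ (the factor at $j$ removed) plus a nonnegative error. The error should vanish identically only when all $k_i=1$ for $i<r$ and $k_r=2$. This uses the duality-like symmetry that makes $(\{1\}^{r-1},2)$ special: there the sum formula gives $S_d=\zeta(r+1)$ for every $d$, which confirms the equality case.

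The main obstacle is exactly this pointwise comparison: choosing the change of variables so that the transformed depth-$(d+1)$ integrand is dominated by the depth-$d$ integrand at every point, rather than merely after integration. If grouping by the last marked index does not yield a pointwise inequality, I would instead symmetrize. I would average the integrand over the order-preserving reparametrizations of the simplex $y_1\ge\cdots\ge y_r$ induced by the weight, and then compare. Strictness then follows because any $k_i>1$ ($i<r$) or $k_r>2$ makes the error integrand positive on a set of positive measure.
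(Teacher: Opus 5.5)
Your proposal has a genuine gap. The step that carries the whole theorem, rewriting $S_{d+1}(\bk)$ so that it is dominated by $S_d(\bk)$, is never carried out. You flag it yourself as ``the main obstacle,'' and even the case $r=2$ is only asserted (``this should produce\dots''). The ``if'' half of the equality statement via the sum formula is fine; the inequality and the ``only if'' half are what remain unproved.

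Your framework also makes that step structurally hard. In the Gamma representation, $\bk$ lives in the measure $d\mu$, while the integrands $f(y_1)e_{d-1}(\dots)$ do not depend on $\bk$. Since $S_d=S_{d+1}$ at $\bk=(\{1\}^{r-1},2)$, where $d\mu$ has everywhere positive density, you cannot write $S_{d+1}=\int\tilde g\,d\mu$ for all $\bk$ with $\tilde g$ independent of $\bk$ and $\tilde g\le f(y_1)e_{d-1}$. That would force $S_d=S_{d+1}$ for every $\bk$. So every comparison must be a $\bk$-dependent integration by parts whose remainder is $(k_i-1)$ or $(k_r-2)$ times a function of fixed sign, and you exhibit none. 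Already for $r=2$, the kernel $\int_0^y x^{k_1-1}(y-x)^{k_2-1}\bigl(1-f(y-x)\bigr)\,dx$ obtained by integrating out $x_1$ is negative for $y<\log 2$.

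The termwise scheme also cannot work as described, because $e_d$ has $\binom{r-1}{d}$ terms and $e_{d-1}$ has $\binom{r-1}{d-1}$. For $d=1$, each of the $r-1$ terms $\int f(y_1)f(y_j)\,d\mu$ of $S_2$ would be compared with the single term $S_1$. The errors would then have to add up to at least $(r-2)S_1$, not merely be nonnegative. With ``plus a nonnegative error'' as you wrote it, the inequality would even point the wrong way. For $\bk=(1,1,2)$ the identity $\zeta(2,2)+\zeta(1,3)=\zeta(4)$ holds only for the sum. So the depth-$(d+1)$ configurations must be matched with disjoint \emph{pieces} of the depth-$d$ configuration, not with whole terms.

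The paper supplies the missing idea by reversing the roles of measure and integrand.
\begin{itemize}
\item It builds $\Omega^{(d)}=\Delta^{(d)}\times(0,1)^r$ with the $\bk$-independent iterated-integral measure of $(\{1\}^{r-1},2)$.
\item On it, it defines integer-valued $N_l=\sum_{j<l}\delta_j$ with $\delta_{p_i}=\lceil\log_{x_i}v_i\rceil$. These satisfy $\mu^{(d)}(\bN=\bn)=1/(n_1\cdots n_{r-1}n_r^2)$ exactly on nondecreasing $\bn$ with $d$ distinct entries.
\item Hence $S_d(\bk)=\int N_1^{-(k_1-1)}\cdots N_{r-1}^{-(k_{r-1}-1)}N_r^{-(k_r-2)}\,\mu^{(d)}$, and $\bk$ enters only through an integrand that is nonincreasing in each $N_l$.
\item The explicit measure-preserving bijection $\Delta^{(d)}\to\Delta^{(1)}$ of Section~3, which underlies the sum formula, puts all depths on one space.
\item There one checks $x^{(d)}_i<x^{(d+1)}_i$ and $p^{(d)}_i\ge p^{(d+1)}_i$ for $0\le i<d$, so $N^{(d)}_l\le N^{(d+1)}_l$ pointwise.
\end{itemize}
The equality case is then immediate. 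The integrand is identically $1$ exactly when $\bk=(\{1\}^{r-1},2)$. Otherwise strictness comes from the positive-measure set $x^{(d)}_0<v_0<x^{(d+1)}_0$, where $N^{(d)}_l<N^{(d+1)}_l$ for all $l$.
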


We have the following consequences. 

\begin{cor}\label{cor:t=-1}
We have 
\[(-1)^{r-1}\zeta^{-1}(\bk)\ge 0. \]
Equality holds if and only if $r$ is even and $\bk=(\{1\}^{r-1},2)$. 
\end{cor}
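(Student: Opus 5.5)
We deduce Corollary~\ref{cor:t=-1} directly from Theorem~\ref{thm:main}. Substituting $t=-1$ in $\zeta^t(\bk)=\sum_{d=1}^{r}S_d(\bk)t^{r-d}$ gives
\[
(-1)^{r-1}\zeta^{-1}(\bk)=\sum_{d=1}^{r}(-1)^{r-1}(-1)^{r-d}S_d(\bk)=\sum_{d=1}^{r}(-1)^{d-1}S_d(\bk)
=S_1(\bk)-S_2(\bk)+S_3(\bk)-\cdots .
\]
We group the terms of this alternating sum in consecutive pairs $\bigl(S_{2j-1}(\bk)-S_{2j}(\bk)\bigr)$ for $1\le j\le \lfloor r/2\rfloor$. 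When $r$ is odd, one unpaired term $S_r(\bk)$ is left over.

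Each pair is $\ge 0$ by \eqref{eq:main}. The leftover term $S_r(\bk)=\zeta(\bk)$ is strictly positive, since it is a convergent series of positive terms; convergence holds because $k_r\ge 2$ and $k_i\ge 1$. This proves the inequality.

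For the equality case, suppose first that $r$ is odd. Then the sum is at least $S_r(\bk)>0$, so equality never holds. Suppose next that $r$ is even. The sum vanishes if and only if every pair vanishes, that is, $S_{2j-1}(\bk)=S_{2j}(\bk)$ for all $j$. In particular this requires $S_1(\bk)=S_2(\bk)$, and by the equality clause of Theorem~\ref{thm:main} (with $d=1<r$) this forces $\bk=(\{1\}^{r-1},2)$.

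Conversely, if $\bk=(\{1\}^{r-1},2)$, the same equality clause gives $S_d(\bk)=S_{d+1}(\bk)$ for every $d$. Hence all pairs vanish and the sum is $0$.

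The only point needing care is the equality case. Its reverse direction relies on Theorem~\ref{thm:main} asserting equality for \emph{every} $d$ when $\bk=(\{1\}^{r-1},2)$, which is exactly how the statement reads. Apart from that, the deduction is routine.
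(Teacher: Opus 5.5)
Your proof is correct and follows essentially the same route as the paper. The paper also writes $(-1)^{r-1}\zeta^{-1}(\bk)$ as the consecutive differences $S_{2j-1}(\bk)-S_{2j}(\bk)$, plus the leftover $S_r(\bk)>0$ when $r$ is odd, and concludes from Theorem~\ref{thm:main} including its equality clause; the converse direction is also confirmed by the sum formula $S_d(\{1\}^{r-1},2)=\zeta(r+1)$ noted in the paper's Remark.
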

\begin{proof}
Since 
\[(-1)^{r-1}\zeta^{-1}(\bk)=\begin{cases}
(S_1-S_2)+\dots+(S_{r-1}-S_r) & (\text{$r$:even}), \\
(S_1-S_2)+\dots+(S_{r-2}-S_{r-1})+S_r & (\text{$r$:odd}), 
\end{cases}\]
the claim follows from Theorem \ref{thm:main}. 
\end{proof}

\begin{cor}\label{cor:roots}
All roots $\alpha$ of the polynomial $\zeta^t(\bk)$ satisfy $|\alpha|\le 1$. 
Equality holds for some root if and only if $r\ge 2$ and $\bk=(\{1\}^{r-1},2)$. 
\end{cor}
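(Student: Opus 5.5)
The root bound will come from the Eneström--Kakeya theorem applied to the coefficient sequence given by Theorem \ref{thm:main}. Write
\[
\zeta^t(\bk)=\sum_{d=1}^r S_d\,t^{r-d}=S_1t^{r-1}+S_2t^{r-2}+\dots+S_r .
\]
Its leading coefficient is $S_1$ and its constant term is $S_r=\zeta(\bk)>0$. Theorem \ref{thm:main} gives $S_1\ge S_2\ge\cdots\ge S_r>0$, so the coefficients are positive and non-increasing in the degree-descending order.

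\textbf{Step 1 (the bound $|\alpha|\le 1$).} Eneström--Kakeya says that a polynomial $a_0+a_1t+\dots+a_nt^n$ with $a_n\ge a_{n-1}\ge\dots\ge a_0>0$ has all its roots in $|t|\le 1$. I will prove the needed version directly rather than quote it. Set $P(t)=\zeta^t(\bk)$ and consider
\[
(1-t)P(t)=S_1t^{r}\cdot(-1)+\sum_{j=1}^{r-1}(S_{j}-S_{j+1})\,t^{r-j}\cdot(\text{sign})+\cdots,
\]
or, more cleanly, the reversed form
\[
(t-1)P(t)=S_1t^r-\sum_{d=1}^{r-1}(S_d-S_{d+1})t^{r-d}-S_r .
\]
Suppose $|t|>1$. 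Then
\[
\Bigl|\sum_{d=1}^{r-1}(S_d-S_{d+1})t^{r-d}+S_r\Bigr|\le\sum_{d=1}^{r-1}(S_d-S_{d+1})|t|^{r-d}+S_r .
\]
Each power here is at most $|t|^{r-1}$, and $S_r$ is counted with weight $1<|t|^{r-1}$. Since the coefficients telescope to $S_1$, the right side is strictly less than $S_1|t|^r$. Hence $(t-1)P(t)\neq0$, and so $P(t)\ne0$.

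\textbf{Step 2 (when is there a root with $|\alpha|=1$?).} Suppose $|t|=1$ and $P(t)=0$. Then the triangle inequality above must be an equality:
\[
S_1=\Bigl|\sum_{d=1}^{r-1}(S_d-S_{d+1})t^{r-d}+S_r\Bigr|.
\]
The sum has nonnegative weights adding to $S_1$. Equality forces every term with a nonzero coefficient to share one argument. The constant term $S_r>0$ forces that argument to be $0$, so $t^{r-d}=1$ whenever $S_d>S_{d+1}$.

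If $\bk\neq(\{1\}^{r-1},2)$ and $r\ge2$, Theorem \ref{thm:main} gives strict inequality $S_d>S_{d+1}$ for all $d$. In particular this holds for $d=r-1$, which gives $t=1$. But $P(1)=\sum_d S_d>0$, a contradiction. If $r=1$ the polynomial is the nonzero constant $\zeta(k_1)$ and has no roots at all, so the equality case requires $r\ge2$.

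For the converse, take $\bk=(\{1\}^{r-1},2)$ with $r\ge2$. Here all $S_d$ are equal to a common value $S$, so
\[
P(t)=S(1+t+\dots+t^{r-1}).
\]
Its roots are the nontrivial $r$-th roots of unity, which lie on $|t|=1$.

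\textbf{Main obstacle.} The strict-inequality bookkeeping is the delicate point. Step 1 needs care at the $S_r$ term. In Step 2, Theorem \ref{thm:main} must be read so that strictness holds for every $d$ simultaneously when $\bk$ is not $(\{1\}^{r-1},2)$. As stated, the equality case there is characterized per $d$, so this reading applies, and only $d=r-1$ is actually used.
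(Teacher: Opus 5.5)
Your proof is correct and follows the paper's route. The paper simply cites the Eneström--Kakeya theorem for $S_1\ge\cdots\ge S_r>0$, and you reprove it inline via $(t-1)P(t)$. The one difference is how roots on $|t|=1$ are excluded when $\bk\ne(\{1\}^{r-1},2)$: the paper uses the refined form $|\alpha|\le\max_{d}S_{d+1}/S_d<1$, while you use the equality case of the triangle inequality, which needs only $S_{r-1}>S_r$.

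One small slip: in Step 1 the remark ``weight $1<|t|^{r-1}$'' is false for $r=1$. It is not needed, since the strict inequality already follows from $S_1|t|^{r-1}<S_1|t|^r$.
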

\begin{proof}
This is an application of the Enestr\"{o}m--Kakeya theorem \cite{K}. 
Note that, if $\bk\ne(\{1\}^{r-1},2)$, then 
$\max_{1\le d<r}S_{d+1}(\bk)/S_d(\bk)<1$ by Theorem \ref{thm:main}. 
\end{proof}

To prove Theorem \ref{thm:main}, 
we will construct certain measure spaces $\Omega^{(d)}$ 
and functions $\bN^{(d)}$ on them, 
whose integrals represent the values $S_d(\bk)$ (\S2). 
Then we show that $\Omega^{(d)}$ is essentially isomorphic 
to a single measure space $\Omega$ independent of $d=1,\dots,r$ (\S3), 
and prove the inequality between $\bN^{(d)}$ and $\bN^{(d+1)}$ 
which implies $S_d(\bk)\ge S_{d+1}(\bk)$ (\S4). 

The interpolated multiple zeta values ($t$-MZVs for short) 
were first introduced in \cite{Y} and 
have been investigated by several authors (e.g.\ \cite{TW,LC,L}), 
mainly from the viewpoint of their algebraic structure. 
Our result has a different flavor from these earlier works 
since it considers \emph{inequality} rather than \emph{equality}. 

On the other hand, we also remark that there is some possibility 
of proving Theorem \ref{thm:main} in a more algebraic way. 
In fact, for admissible indices $\bk$, 
one may even expect that $S_d(\bk)-S_{d+1}(\bk)$ can be 
represented as linear combinations of MZVs 
with \emph{nonnegative integer} coefficients. 
Numerical experiments with indices of small weight support this conjecture. 

\subsection*{Use of AI}
A large part of this paper emerged from discussions with ChatGPT. 
In particular, the initial version of the proof of Theorem \ref{thm:main} was generated by ChatGPT-6 Astra. 
Then the author rewrote it entirely while preserving the basic idea. 
The author takes full responsibility for the content of the paper.

\section{Integral representation of $S_d(\bk)$}
Let $d$ and $r$ be integers with $1\le d\le r$. 
In this section, we will construct a certain measure space 
$(\Omega^{(d)},\mu^{(d)})$ 
and an $r$-tuple $\bN=(N_1,\ldots,N_r)$ of functions 
on $\Omega^{(d)}$ taking values in the positive integers 
(we regard $r$ as fixed throughout, and omit it from the notation). 
Then we will show: 

\begin{prop}\label{prop:mu N}
We have 
\begin{equation}\label{eq:mu N}
\mu^{(d)}\bigl(\bN=\bn\bigr)
=\begin{cases}
\dfrac{1}{n_1\dots n_{r-1}n_r^2} & (n_1\le\dots\le n_r
\text{ and } \rho(\bn)=d), \\
0 & (\text{otherwise}) 
\end{cases}
\end{equation}
for any $r$-tuple $\bn=(n_1,\ldots,n_r)$ of positive integers, 
where $\rho(\bn)$ denotes the number of distinct components in $\bn$, 
i.e., $\rho(\bn)=\#\{n_1,\ldots,n_r\}$. 
\end{prop}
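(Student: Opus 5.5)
The plan is to compute the law of $\bN$ directly from the definition of $(\Omega^{(d)},\mu^{(d)})$ and $\bN$ given in the construction. I am writing this before seeing that construction, so the steps below describe how I expect it to be built. I expect $\Omega^{(d)}$ to be a product, or a finite disjoint union of products, of a discrete part and continuous coordinates:
\begin{itemize}
\item the discrete part encodes the $d$ distinct values and which consecutive indices share a value;
\item the continuous coordinates, with Lebesgue-type weights, produce the factors $1/n_i$.
\end{itemize}
The first step is to read off from the definition that $\bN$ is always weakly increasing and takes exactly $d$ distinct values. That immediately gives the ``otherwise'' case of \eqref{eq:mu N}: the event $\{\bN=\bn\}$ is empty unless $n_1\le\dots\le n_r$ and $\rho(\bn)=d$.

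Next I fix an admissible $\bn$ with $n_1\le\dots\le n_r$ and $\rho(\bn)=d$. Let $m_1<\dots<m_d$ be its distinct values, and let the composition $r=\ell_1+\dots+\ell_d$ record their multiplicities. The equalities in $\bn$ determine this block data uniquely, so $\{\bN=\bn\}$ should sit inside a single component of $\Omega^{(d)}$ indexed by that composition. Inside that component, the event should be a product of conditions, one per block or one per coordinate. By Fubini/Tonelli, which applies since everything is nonnegative, $\mu^{(d)}(\bN=\bn)$ factors as a product of one-dimensional measures.

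Each factor should be evaluated by an elementary computation. For a uniform variable $u\in(0,1]$ one has
\[\operatorname{Leb}\{\lfloor 1/u\rfloor=n\}=\tfrac1{n(n+1)},\qquad \sum_{m\ge n}\tfrac{1}{m(m+1)}=\tfrac1n,\]
and I expect the construction to use identities of exactly this kind, since $\tfrac1n=\int_0^{1/n}du$. The aim is to show that each index $i<r$ contributes exactly $1/n_i$, with repeated values inside a block contributing repeated factors $1/m_j$. The last index should contribute the extra factor $1/n_r^2$, corresponding to the admissibility weight for $k_r\ge2$. Multiplying the factors then gives $1/(n_1\cdots n_{r-1}n_r^2)$.

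The main obstacle is the bookkeeping at the block boundaries. If the construction introduces the distinct values $m_j$ through nested conditions such as $m_{j-1}<m_j$, the factors are not literally independent. In that case I will integrate successively from the top value $m_d=n_r$ downwards, conditioning on $m_{j+1}$ at each stage, and check that the conditional density of $m_j$ is exactly $\prod(\text{block }j\text{ weights})$ with no leftover normalizing constant. If a leftover constant does appear, that is where the telescoping sum $\sum_{m\ge n}\frac1{m(m+1)}=\frac1n$ should absorb it.

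Finally, I will sanity-check the result. Summing \eqref{eq:mu N} over $\bn$ against $\prod_i n_i^{1-k_i}\cdot n_r^{2-k_r}$ should reproduce $S_d(\bk)$, which is presumably how the proposition is used later.
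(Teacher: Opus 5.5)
Your reduction step is right and matches the paper. On $\Omega^{(d)}_J$ one has $\delta_j=\lceil\log_{y_j}v_i\rceil\ge 1$ for $j=p_i\in\{0\}\cup J$ and $\delta_j=0$ otherwise. Hence $\bN$ is weakly increasing, increases exactly at the positions in $J$, and satisfies $\rho(\bN)=d$. Moreover $\{\bN=\bn\}$ lies in the single piece $\Omega^{(d)}_J$ with $J=\{j\mid n_j<n_{j+1}\}$.

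\textbf{The gap.} You never actually evaluate $\mu^{(d)}(\bN=\bn)$, and the mechanism you anticipate does not fit the construction.
\begin{itemize}
\item Only $J$ is discrete, and it is determined by the relative order of the $x_i$ rather than being an independent coordinate. The values of $\bN$ come from the continuous coordinates.
\item On $\Omega^{(d)}_J$ the condition $\bN=\bn$ reads $y_j^{a_j}\le v_i<y_j^{a_j-1}$ with $j=p_i$, $a_j=n_{j+1}-n_j$ and $n_0=0$. This couples each $v_i$ to a $y$-coordinate.
\item The $y$'s are confined to the ordered simplex $0<y_0<\dots<y_r<1$. For $j\in\{1,\dots,r\}\setminus J$ they carry weights $dy_j/y_j$, each of infinite total mass on $(0,1)$.
\end{itemize}
So in these coordinates the event is not a product set, and $\mu^{(d)}$ is not a product of finite one-dimensional measures. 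A Tonelli factorization ``one factor per index'' is not available. The identities $\frac{1}{n(n+1)}$ and $\sum_{m\ge n}\frac{1}{m(m+1)}=\frac1n$ play no role.

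\textbf{What you need.} The paper's proof rests on two steps.

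First, integrate out $\bv$ for fixed $\by$. The variable $\lceil\log_y v\rceil$ is geometric, so the admissible $v_i$-interval has length $y_j^{a_j-1}(1-y_j)$. The factor $1-y_j$ cancels the weight $dy_j/(1-y_j)$. For $j\notin\{0\}\cup J$ one has $a_j=0$, so $dy_j/y_j=y_j^{a_j-1}dy_j$ as well. This gives the uniform expression
\[\mu^{(d)}(\bN=\bn)=\int_{0<y_0<\dots<y_r<1}\prod_{j=0}^{r-1}y_j^{n_{j+1}-n_j-1}\,dy_j\cdot\frac{dy_r}{y_r}.\]

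Second, evaluate this iterated integral from the bottom. After integrating $y_0,\dots,y_{l-2}$, the integrand in $y_{l-1}$ is $y_{l-1}^{n_l-1}/(n_1\cdots n_{l-1})$. Then $\int_0^{y_l}y_{l-1}^{n_l-1}\,dy_{l-1}=y_l^{n_l}/n_l$. Finally $\int_0^1 y_r^{n_r}\,dy_r/y_r=1/n_r$ supplies the second factor $1/n_r$.

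So the factors $1/n_l$ arise from nested integration over the simplex, with the exponents accumulating to $n_l$. They do not come from independent one-dimensional pieces, nor from conditioning on the discrete values $m_j$.
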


The equation \eqref{eq:mu N} implies the following expression 
of $S_d(\bk)$: 
\begin{equation}\label{eq:S_d integral}
S_d(\bk)=\int_{\Omega^{(d)}}
N_1^{-(k_1-1)}\dots N_{r-1}^{-(k_{r-1}-1)}
N_r^{-(k_r-2)}\mu^{(d)}. 
\end{equation}
This representation is the key to our proof of Theorem \ref{thm:main}. 

We first define a space $\Delta^{(d)}$ by 
\[\Delta^{(d)}=
\biggl\{(x_0,\dots,x_r)\in (0,1)^{r+1}\biggm|
\begin{tabular}{l}
$x_0<x_1<\dots<x_{d-1}<x_d$, \\
$x_0<x_r<\dots<x_{d+1}<x_d$
\end{tabular}\biggr\}. \]
This space is equipped with the measure 
\[\prod_{i=0}^{d-1}\frac{dx_i}{1-x_i}\cdot\prod_{i=d}^r\frac{dx_i}{x_i}. \]
The volume of $\Delta^{(d)}$ with respect to this measure is 
\begin{equation}\label{eq:vol Delta(d)}
\int_{\Delta^{(d)}}
\prod_{i=0}^{d-1}\frac{dx_i}{1-x_i}\cdot\prod_{i=d}^r\frac{dx_i}{x_i}
=S_d(\{1\}^{r-1},2); 
\end{equation}
indeed, each shuffle of $(x_1,\ldots,x_{d-1})$ and 
$(x_r,\ldots,x_{d+1})$ gives the iterated integral expression of 
a multiple zeta value of weight $r+1$, depth $d$.  
For later use, we describe this ``decomposition by shuffle'' in more detail. 

In what follows, we often disregard the null set 
on which two coordinates $x_i$ and $x_j$ coincide. 
For each $\bx=(x_0,\ldots,x_r)\in\Delta^{(d)}$, we set 
\[p_i=p_i(\bx)\coloneqq \#\{j\in\{0,\ldots,r\}\mid x_j<x_i\}
\quad (0\le i\le d) \]
and 
\[J(\bx)\coloneqq \{p_1(\bx),\ldots,p_{d-1}(\bx)\}. \]
Then we have 
\[0=p_0<p_1<\cdots<p_{d-1}<p_d=r. \]
If we set 
\[\Delta^{(d)}_J\coloneqq\{\bx\in\Delta^{(d)}\mid J(\bx)=J\}\]
for each $J\subset\{1,\ldots,r-1\}$ with $\#J=d-1$, 
we obtain the disjoint decomposition 
\begin{equation}\label{eq:J decomp}
\Delta^{(d)}=\coprod_{J}\Delta^{(d)}_J. 
\end{equation}
For each such $J=\{p_1,\ldots,p_{d-1}\}$, 
let us use another set of coordinates $\by=(y_0,\ldots,y_r)$ 
on $\Delta^{(d)}_J$ which is obtained by arranging $(x_0,\ldots,x_r)$ 
in increasing order; 
thus we have $y_{p_i}=x_i$ for $0\le i\le d$, and the other $y_j$'s 
correspond to $x_r,\ldots,x_{d+1}$. 
Then 
\begin{align*}
&\int_{\Delta^{(d)}_J}
\prod_{i=0}^{d-1}\frac{dx_i}{1-x_i}\cdot\prod_{i=d}^r\frac{dx_i}{x_i}\\
&=\int_{0<y_0<\cdots<y_r<1}
\prod_{i=0}^{d-1}\biggl(\frac{dy_{p_i}}{1-y_{p_i}}
\prod_{j=p_i+1}^{p_{i+1}-1}\frac{dy_j}{y_j}\biggr)
\cdot \frac{dy_r}{y_r}\\
&=\zeta(p_1,p_2-p_1,\ldots,p_{d-1}-p_{d-2},r-p_{d-1}+1). 
\end{align*}
By summing over all possible $J$'s, 
we obtain the equation \eqref{eq:vol Delta(d)}. 

Next we construct the functions $N_1,\ldots,N_r$. 
For this purpose, we need to extend the space to 
$\Omega^{(d)}\coloneqq \Delta^{(d)}\times(0,1)^r$. 
The coordinates on the latter component $(0,1)^r$ will be denoted 
by $\bv=(v_0,\dots,v_{r-1})$. 
We equip this space $\Omega^{(d)}$ with the measure 
\[\mu^{(d)}\coloneqq\prod_{i=0}^{d-1}\frac{dx_i}{1-x_i}
\cdot\prod_{i=d}^r\frac{dx_i}{x_i}
\cdot\prod_{j=0}^{r-1}dv_j. \]
Thus the volume is still $\mu^{(d)}(\Omega^{(d)})=S_d(\{1\}^{r-1},2)$. 

\begin{defn}
For each subset $J$ of $\{1,\dots,r-1\}$ with $\#J=d-1$, 
we define functions $\delta_j$ ($j=0,\dots,r-1$) 
and $N_l$ ($l=1,\ldots,r$) on 
$\Omega^{(d)}_J\coloneqq\Delta^{(d)}_J\times(0,1)^r$ by 
\begin{align*}
\delta_j(\by,\bv)&\coloneqq \begin{cases}
\lceil\log_{x_i}v_i\rceil & (j=p_i,\ 0\le i<d), \\
0 & (\text{otherwise}), 
\end{cases}\\
N_l(\by,\bv)&\coloneqq\sum_{j=0}^{l-1}\delta_j(\by,\bv), 
\end{align*}
using the coordinates $\by=(y_0,\dots,y_r)$ on $\Delta^{(d)}_J$ and 
$\bv=(v_0,\dots,v_{r-1})$ on $(0,1)^r$, respectively. 
Then we collect them together to form the functions $\delta_j$ 
and $N_l$ on $\Omega^{(d)}=\coprod_{J}\Omega^{(d)}_J$. 
\end{defn}

\begin{proof}[Proof of Proposition \ref{prop:mu N}]
Take any $r$-tuple $\bn=(n_1,\dots,n_r)$ of positive integers and put $n_0=0$. 
Then the equation $\bN=\bn$ holds only if $\bn$ satisfies 
\[n_1\le\dots\le n_r\text{ and }\rho(\bn)=d. \]
If this is the case, 
the equation $\bN=\bn$ can occur only on $\Omega^{(d)}_J$ with 
\[J=\{1\le j\le r-1\mid n_j<n_{j+1}\},\]
and is equivalent to the conditions 
\[y_j^{n_{j+1}-n_j}\le v_i<y_j^{n_{j+1}-n_j-1}\quad 
(j=p_i\in\{0\}\cup J,\ 0\le i<d). \]
Therefore, we have 
\begin{align*}
&\mu^{(d)}(\bN=\bn)\\
&=\int_{0<y_0<\dots<y_r<1}
\prod_{j\in \{0\}\cup J}(y_j^{n_{j+1}-n_j-1}-y_j^{n_{j+1}-n_j})\frac{dy_j}{1-y_j}
\cdot\prod_{j\in\{1,\dots,r\}\setminus J}\frac{dy_j}{y_j}\\
&=\int_{0<y_0<\dots<y_r<1}
\prod_{j=0}^{r-1}\bigl(y_j^{n_{j+1}-n_j-1}dy_j\bigr)\cdot \frac{dy_r}{y_r}\\
&=\frac{1}{n_1\dots n_{r-1}n_r^2}. 
\end{align*}
This completes the proof of Proposition \ref{prop:mu N}. 
\end{proof}

\section{Unification of spaces}
In this section, we construct an explicit isomorphism of 
measure spaces between $\Omega\coloneqq\Omega^{(1)}$ and $\Omega^{(d)}$. 

The coordinates on $\Delta\coloneqq\Delta^{(1)}$ and $\Delta^{(d)}$ 
will be written as $\bz=(z_0,\dots,z_r)$ and $\bx=(x_0,\dots,x_r)$; 
by definition, they satisfy 
\begin{align*}
&0<z_0<z_r<\dots<z_2<z_1<1,\\
&0<x_0<x_1<\dots<x_{d-1}<x_d<1, \\
&0<x_0<x_r<\dots<x_{d+1}<x_d<1. 
\end{align*}
For notational convenience, we set $z_{r+1}=z_0$ and $x_{r+1}=x_0$. 

We consider the map from $\Delta^{(d)}$ to $\Delta$ given by 
\begin{equation}\label{eq:x to z}
z_i\coloneqq\begin{cases}
\dfrac{1-x_i}{1-x_0} & (1\le i\le d), \\
\dfrac{x_i(1-x_d)}{x_d(1-x_0)} & (d<i\le r+1). 
\end{cases}
\end{equation}
It is easily verified that this is a bijection. 
Moreover, this map preserves the measure: 

\begin{prop}\label{prop:measures}
Under the transformation \eqref{eq:x to z}, 
we have the identity of measures
\[\frac{dz_0}{1-z_0}\prod_{i=1}^r\frac{dz_i}{z_i}
=\prod_{i=0}^{d-1}\frac{dx_i}{1-x_i}\cdot\prod_{i=d}^r\frac{dx_i}{x_i}. \] 
\end{prop}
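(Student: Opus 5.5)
We verify the identity by a direct change of variables, i.e.\ by computing the Jacobian of \eqref{eq:x to z} in logarithmic form. Both sides are top-degree forms on an $(r+1)$-dimensional space, so it suffices to compare $\bigwedge$-products of one-forms up to sign (we compare absolute values, as measures). We introduce the auxiliary quantities $a=1-x_0$ and $b=x_d/(1-x_d)$. Then $z_i=(1-x_i)/a$ for $1\le i\le d$, and $z_i=x_i/(ab)$ for $d<i\le r+1$; in particular $z_0=z_{r+1}=x_0/(ab)$.

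The strategy is to write everything in terms of logarithmic differentials:
\begin{align*}
\frac{dz_i}{z_i}&=-\frac{dx_i}{1-x_i}-d\log a \quad(1\le i\le d),\\
\frac{dz_i}{z_i}&=\frac{dx_i}{x_i}-d\log a-d\log b \quad(d<i\le r),
\end{align*}
with $d\log a=-dx_0/(1-x_0)$ and $d\log b=dx_d/(x_d(1-x_d))$. For $z_0$ we need $dz_0/(1-z_0)$, which is not logarithmic. The plan is to handle it by ordering the wedge product cleverly. We first wedge $\frac{dz_1}{z_1}\wedge\cdots\wedge\frac{dz_r}{z_r}$, which involves only the $r$ variables $x_1,\dots,x_r$ together with $x_0$ through $d\log a$. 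We then observe the following: for $i=d$ the form $dz_d/z_d$ contains $-dx_d/(1-x_d)$, and for $i>d$ the forms contain $dx_d$ through $d\log b$. We row-reduce, subtracting multiples of $dz_d/z_d$ from the others, to triangularize modulo $dx_0$.

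Concretely, modulo $dx_0$ the $d\log a$ terms vanish, and the wedge of $dz_1/z_1,\dots,dz_r/z_r$ becomes $\pm\prod_{i=1}^{d}\frac{dx_i}{1-x_i}\prod_{i>d}\frac{dx_i}{x_i}$. The $d\log b$ contributions are killed because each is a multiple of $dx_d$, which already appears in the $i=d$ factor.

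Thus the left side equals this product wedged with the $dx_0$-component of $dz_0/(1-z_0)$. That is, we must compute $\partial z_0/\partial x_0$ at fixed $x_1,\dots,x_r$ and divide by $1-z_0$.

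The main computation, and the expected obstacle, is this last factor. With $z_0=x_0(1-x_d)/(x_d(1-x_0))$, we get $\partial z_0/\partial x_0=\frac{1-x_d}{x_d}\cdot\frac{1}{(1-x_0)^2}$. We also have
\[
1-z_0=\frac{x_d(1-x_0)-x_0(1-x_d)}{x_d(1-x_0)}=\frac{x_d-x_0}{x_d(1-x_0)}.
\]
Hence the $dx_0$-coefficient of $dz_0/(1-z_0)$ is $\frac{1-x_d}{(1-x_0)(x_d-x_0)}$. Combining with the previous step, the left side equals
\[
\frac{1-x_d}{x_d-x_0}\cdot\frac{dx_0}{1-x_0}\prod_{i=1}^{d}\frac{dx_i}{1-x_i}\prod_{i>d}\frac{dx_i}{x_i}.
\]
This is not yet the right side, whose $x_d$ factor is $dx_d/x_d$ rather than $dx_d/(1-x_d)$.

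The mismatch signals an error in the plan: the reduction modulo $dx_0$ was illegitimate, because the $dz_0$ form also contains $dx_d$. So we instead treat the pair $(x_0,x_d)$ jointly. We expand the wedge in which the two forms $dz_0/(1-z_0)$ and $dz_d/z_d$ carry the only dependence on $dx_0\wedge dx_d$ after the $d\log b$ terms of $i>d$ are eliminated. Eliminating those terms is possible, since $d\log b$ and $d\log a$ are combinations of $dx_0,dx_d$, which are absorbed by the $2\times2$ block. The $i>d$ forms then contribute $\prod dx_i/x_i$, and the $1\le i<d$ forms contribute $\prod dx_i/(1-x_i)$.

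It remains to compute the $2\times2$ determinant of $\bigl(dz_0/(1-z_0),\,dz_d/z_d\bigr)$ in $(dx_0,dx_d)$. We have
\[
d\log z_d=\frac{dx_0}{1-x_0}-\frac{dx_d}{1-x_d},
\qquad
\frac{dz_0}{1-z_0}=\frac{z_0}{1-z_0}\Bigl(\frac{dx_0}{x_0(1-x_0)}-\frac{dx_d}{x_d(1-x_d)}\Bigr),
\]
where $z_0/(1-z_0)=x_0(1-x_d)/(x_d-x_0)$. The determinant is then
\[
\frac{x_0(1-x_d)}{x_d-x_0}\cdot\frac{1}{(1-x_0)(1-x_d)}\Bigl(-\frac{1}{x_0}+\frac{1}{x_d}\Bigr)
=-\frac{1}{x_d(1-x_0)}.
\]
This gives exactly $\frac{dx_0}{1-x_0}\wedge\frac{dx_d}{x_d}$ up to sign, which proves the identity. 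The only care needed is the bookkeeping of the block elimination; we will present it as a block-triangular Jacobian matrix.
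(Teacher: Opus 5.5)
Your final argument, after you discard the reduction modulo $dx_0$, is correct and is essentially the paper's proof. The paper likewise computes the Jacobian of $(x_0,x_d)\mapsto(z_0,z_d)$ to get $\frac{dz_0}{1-z_0}\frac{dz_d}{z_d}=\frac{dx_0}{1-x_0}\frac{dx_d}{x_d}$, then notes that once $x_0,x_d$ are fixed each remaining $z_i$ is an affine function of $x_i$ alone; your block-triangular Jacobian with $2\times 2$ determinant $-1/(x_d(1-x_0))$ simply makes that computation explicit. In the write-up, remove the abandoned first attempt and present only the block-triangular argument.
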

\begin{proof}
First consider the transformation 
\[\begin{cases}
z_0=\dfrac{x_0(1-x_d)}{x_d(1-x_0)}\\
z_d=\dfrac{1-x_d}{1-x_0}
\end{cases}\]
from $(x_0,x_d)$ to $(z_0,z_d)$. A computation of the Jacobian shows 
\[\frac{dz_0}{1-z_0}\frac{dz_d}{z_d}=\frac{dx_0}{1-x_0}\frac{dx_d}{x_d}. \]
Moreover, once these variables are fixed, the remaining variables are 
transformed linearly, and one easily shows that 
\[\frac{dz_i}{z_i}=\begin{cases}
\dfrac{dx_i}{1-x_i} & (0<i<d),\\
\dfrac{dx_i}{x_i} & (d<i\le r) 
\end{cases}\]
(as identities of one-dimensional measures). 
Thus the proposition follows. 
\end{proof}

\begin{rem}
Integration of the identity of Proposition \ref{prop:measures} 
shows the sum formula $S_d(\{1\}^{r-1},2)=S_1(\{1\}^{r-1},2)=\zeta(r+1)$ 
\cite{G}. 
\end{rem}

We extend the transformation \eqref{eq:x to z} to 
the bijection between $\Omega^{(d)}=\Delta^{(d)}\times(0,1)^r$ 
and $\Omega=\Delta\times(0,1)^r$ simply as $(\bx,\bv)\mapsto(\bz,\bv)$. 
By Proposition \ref{prop:measures}, we have 
the identity $\mu^{(d)}=\mu\coloneqq\mu^{(1)}$ of measures 
under this bijection. 

\section{Comparison of $d$ and $d+1$}
Through the transformation \eqref{eq:x to z}, 
we regard the functions on $\Omega^{(d)}$ considered 
in the previous sections as the functions on $\Omega$; 
these functions are denoted by adding the superscript $(d)$. 
For example, the coordinates $x_i$ on $\Omega^{(d)}$ will be written 
as $x^{(d)}_i$ when they are regarded as functions on $\Omega$. 
In addition, the equation \eqref{eq:S_d integral} is rewritten as
\[S_d(\bk)=\int_{\Omega}
(N^{(d)}_1)^{-(k_1-1)}\dots (N^{(d)}_{r-1})^{-(k_{r-1}-1)}
(N^{(d)}_r)^{-(k_r-2)}\mu. \]
Therefore, the proof of Theorem \ref{thm:main} is reduced to the following statement.  

\begin{prop}
For $1\le d<r$, we have $N^{(d)}_l\le N^{(d+1)}_l$ for every $l=1,\ldots,r$, almost everywhere on $\Omega$. 
Moreover, these inequalities are strict simultaneously on a set of positive measure. 
\end{prop}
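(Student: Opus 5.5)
The plan is to make the inequality between $N^{(d)}_l$ and $N^{(d+1)}_l$ transparent by writing everything in the $\bz$-coordinates of $\Omega$. First I invert \eqref{eq:x to z}. From $z_0=x_0(1-x_d)/(x_d(1-x_0))$ and $z_d=(1-x_d)/(1-x_0)$ one solves
\[x^{(d)}_0=\frac{z_0(1-z_d)}{z_d(1-z_0)},\qquad 1-x^{(d)}_0=\frac{z_d-z_0}{z_d(1-z_0)}.\]
The remaining coordinates are then $x^{(d)}_i=1-z_i(1-x^{(d)}_0)$ for $1\le i\le d$, and $x^{(d)}_j=z_jx^{(d)}_d/z_d$ for $d<j\le r$.

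Next I unwind the definition of $N_l$. On $\Omega^{(d)}$ we have
\[N^{(d)}_l=\sum_{0\le i<d,\ p^{(d)}_i<l}\bigl\lceil\log_{x^{(d)}_i}v_i\bigr\rceil .\]
Each summand is $\ge1$, and for fixed $v_i$ it is nondecreasing in the base $x_i\in(0,1)$. So it suffices to prove three things almost everywhere on $\Omega$:
\begin{itemize}
\item[(a)] $x^{(d+1)}_i\ge x^{(d)}_i$ for $0\le i<d$;
\item[(b)] $p^{(d+1)}_i\le p^{(d)}_i$ for $0\le i<d$, so the $i$-th term enters $N^{(d+1)}_l$ whenever it enters $N^{(d)}_l$;
\item[(c)] the new summand with $i=d$ in $N^{(d+1)}_l$ is nonnegative, which is automatic.
\end{itemize}

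For (a), use $z_{d+1}<z_d$. Since $(1-z)/z$ is decreasing in $z$, the formula above gives $x^{(d+1)}_0>x^{(d)}_0$. Then $x^{(d)}_i=1-z_i(1-x^{(d)}_0)$ for $1\le i<d$ also increases strictly.

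For (b), write $p^{(d)}_i=i+\#\{j>d\mid x^{(d)}_j<x^{(d)}_i\}$. I translate the condition $x^{(d)}_j<x^{(d)}_i$ (for $i\le d<j$) into $\bz$. Substituting the formulas and multiplying by $z_d(1-z_0)$, it should reduce to
\[(z_d-z_j)(1-z_0)>(z_i-z_j)(z_d-z_0),\]
which I rewrite as $z_d(1-z_0-z_i+z_j)>z_j(1-z_0)-z_0(z_i-z_j)$. Since $z_i-z_j<1-z_0$, the coefficient of $z_d$ is positive, so this says $z_d$ exceeds a threshold depending only on $i,j,z_0$. Passing from $d$ to $d+1$ replaces $z_d$ by the smaller $z_{d+1}$, so the condition can only fail more often. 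Also the index $j=d+1$ leaves the set $\{j>d\}$. Hence the counting set for $d+1$ is contained in that for $d$, which gives (b). I expect this algebraic translation, and getting the direction of the threshold right, to be the main obstacle; it is routine but sign-sensitive.

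For strictness, note that $p_0=0<l$ for every $l\ge1$. So the term $\lceil\log_{x_0}v_0\rceil$ appears in every $N_l$, for both $d$ and $d+1$. Since $x^{(d+1)}_0>x^{(d)}_0$ strictly, for each fixed $\bz$ the set of $v_0$ with $\lceil\log_{x^{(d+1)}_0}v_0\rceil>\lceil\log_{x^{(d)}_0}v_0\rceil$ has positive length. For instance, take $v_0$ with $\log_{x^{(d)}_0}v_0$ slightly below an integer $m$ large enough that $\log_{x^{(d+1)}_0}v_0>m$. Combined with the weak inequalities for all other terms, this gives $N^{(d)}_l<N^{(d+1)}_l$ for all $l$ simultaneously on a set of positive $\mu$-measure.
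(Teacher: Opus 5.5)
Your proposal is correct and follows the paper's proof essentially step by step: the same reduction to $x^{(d)}_i<x^{(d+1)}_i$ and $p^{(d)}_i\ge p^{(d+1)}_i$, the same inversion formulas, the same threshold inequality $z_d(1-z_0-z_i+z_j)>z_j-z_0z_i$ compared under $z_{d+1}<z_d$, and strictness via the $\delta_0$ term (the paper takes $x^{(d)}_0<v_0<x^{(d+1)}_0$, which is your argument with $m=1$). Your explicit remark that the extra $i=d$ summand in $N^{(d+1)}_l$ is nonnegative spells out a point the paper leaves implicit.
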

\begin{proof}
By definition, we have 
\[N^{(d)}_l=\sum_{j=0}^{l-1}\delta^{(d)}_j
=\sum_{\substack{0\le i<d\\ p^{(d)}_i<l}}\lceil\log_{x^{(d)}_i}v_i\rceil, \]
and the same formula holds for $d+1$. Therefore, to prove $N^{(d)}_l\le N^{(d+1)}_l$, 
it suffices to show that  
\[x^{(d)}_i<x^{(d+1)}_i\ \text{ and }\ p^{(d)}_i\ge p^{(d+1)}_i\ \text{ for }\ i=0,\ldots,d-1. \]

By inverting the transformation \eqref{eq:x to z}, we have 
\[x^{(d)}_0=\frac{z_0(1-z_d)}{z_d(1-z_0)}\ \text{ and }\ x^{(d+1)}_0=\frac{z_0(1-z_{d+1})}{z_{d+1}(1-z_0)},\]
which imply 
\[x^{(d+1)}_0-x^{(d)}_0=\frac{z_0}{1-z_0}\biggl(\frac{1}{z_{d+1}}-\frac{1}{z_d}\biggr)>0. \]
Similarly, for $i=1,\ldots,d-1$, the formulas 
\[x^{(d)}_i=1-\frac{z_i(z_d-z_0)}{z_d(1-z_0)}\ \text{ and }\ 
x^{(d+1)}_i=1-\frac{z_i(z_{d+1}-z_0)}{z_{d+1}(1-z_0)}\] 
imply 
\[x^{(d+1)}_i-x^{(d)}_i=\frac{z_i}{1-z_0}\biggl(\frac{z_0}{z_{d+1}}-\frac{z_0}{z_d}\biggr)>0. \] 

Next we show $p^{(d)}_i\ge p^{(d+1)}_i$ for $i=1,\ldots,d-1$ (obviously $p^{(d)}_0=p^{(d+1)}_0=0$). 
By definition, we have 
\begin{align*}
p^{(d)}_i&=i+\#\{j\in\{d+1,\ldots,r\}\mid x^{(d)}_i>x^{(d)}_j\}, \\
p^{(d+1)}_i&=i+\#\{j\in\{d+2,\ldots,r\}\mid x^{(d+1)}_i>x^{(d+1)}_j\}. 
\end{align*}
Since 
\[x^{(d)}_j=\frac{z_j(1-z_d)}{z_d(1-z_0)}, \]
we have 
\begin{align*}
x^{(d)}_i>x^{(d)}_j 
&\iff 1-\frac{z_i(z_d-z_0)}{z_d(1-z_0)}-\frac{z_j(1-z_d)}{z_d(1-z_0)}>0\\
&\iff z_d(1-z_0)-z_i(z_d-z_0)-z_j(1-z_d)>0\\
&\iff z_d(1-z_i+z_j-z_0)>z_j-z_0z_i. 
\end{align*}
A similar equivalence holds for $d+1$. Since $(1-z_i)+(z_j-z_0)>0$ and $z_d>z_{d+1}$, 
the implication 
\[x^{(d+1)}_i>x^{(d+1)}_j \implies x^{(d)}_i>x^{(d)}_j\]
holds for $j=d+2,\ldots,r$. Moreover, $j=d+1$ can only be counted in $p^{(d)}_i$. 
Thus we obtain $p^{(d)}_i\ge p^{(d+1)}_i$, and we complete the proof of $N^{(d)}_l\le N^{(d+1)}_l$. 

For the strictness of the inequality, it suffices to have $\delta^{(d)}_0<\delta^{(d+1)}_0$. 
This holds if, say, $x^{(d)}_0<v_0<x^{(d+1)}_0$, and this condition defines a set of positive measure. 
\end{proof}

\end{document}